\newtheorem{tw}{Theorem}[section]
\theoremstyle{remark}
\newtheorem{uw}[tw]{Remark}
\theoremstyle{definition}
\newcommand{\cal}[1]{\mathcal{#1}}
\newcommand{\ro}{\varrho}
\newcommand{\kre}[1]{\overline{#1}}
\newcommand{\ukre}[1]{\underline{#1}}
\newcommand{\map}[3]{#1\colon #2\to #3}
\newcommand{\cM}{{\cal M}}
\newcommand{\cT}{{\cal T}}
\newcommand{\SR}[1]{\underset{\rightarrow}{[#1]}}
\newcommand{\SL}[1]{\underset{\leftarrow}{[#1]}}
\begin{document}

\numberwithin{equation}{section}

\title[A finite presentation for the twist subgroup\ldots]
{A finite presentation for the twist subgroup of the mapping class group of a nonorientable surface}

\author{Micha\l\ Stukow}

\thanks{Supported by NCN grant 2012/05/B/ST1/02171.}

\address[]{
Institute of Mathematics, University of Gda\'nsk, Wita Stwosza 57, 80-952 Gda\'nsk, Poland }

\email{trojkat@mat.ug.edu.pl}


\keywords{Mapping class group, Nonorientable surface, Twist subgroup, Presentation} \subjclass[2000]{Primary 57N05;
Secondary 20F38, 57M99}

\begin{abstract}
Let $N_{g,s}$ denote the nonorientable surface of genus $g$ with $s$ boundary components. Recently Paris and Szepietowski
\cite{SzepParis} obtained an explicit finite presentation for the mapping class group $\cM(N_{g,s})$ of the surface
$N_{g,s}$, where $s\in\{0,1\}$ and $g+s>3$. Following this work, we obtain a finite presentation for the subgroup
$\cT(N_{g,s})$ of $\cM(N_{g,s})$ generated by Dehn twists.
\end{abstract}

\maketitle%
\section{Introduction}%
Let $N_{g,s}$ be a smooth, nonorientable, compact surface of genus $g$ with $s$ boundary components. If
$s$ is zero, then we omit it from the notation. If we do not want to emphasise the numbers $g,s$, we simply 
write $N$ for a surface $N_{g,s}$. Recall that $N_{g}$ is a connected sum of $g$ projective planes 
and $N_{g,s}$ is obtained from $N_g$ by removing $s$ open disks.

Let ${\textrm{Diff}}(N)$ be the group of all diffeomorphisms $\map{h}{N}{N}$ such that $h$ is the identity 
on each boundary component. By ${\cal{M}}(N)$ we denote the quotient group of ${\textrm{Diff}}(N)$ by
the subgroup consisting of maps isotopic to the identity, where we assume that isotopies are 
the identity on each boundary component. ${\cal{M}}(N)$ is called the \emph{mapping class group} of $N$. 

The mapping class group ${\cal{M}}(S_{g,s})$ of an orientable surface is defined analogously, but we consider only
orientation preserving maps. 
\subsection{Background}
One of the most important elements in mapping class groups of surfaces are Dehn twists. They were discovered 
by Max Dehn, who first observed that they generate the mapping class group ${\cal{M}}(S_g)$ 
of a closed oriented surface $S_g$. Twists were rediscovered by Lickorish \cite{Lick1,Lick2}, who also 
proved that ${\cal{M}}(S_g)$ is generated by $3g-1$ Dehn twists about nonseparating circles. Later Humphries reduced this generating set to $2g+1$ twists \cite{Hump}. 

Since Dehn twists generate the mapping class group ${\cal{M}}(S_g)$, it is natural to ask about possible relations between them. Let us mention some results in this direction. Birman \cite{Bir2} observed that there is a close relation between mapping class group ${\cal{M}}(S_g)$ and the mapping class group of a punctured sphere, which in fact is a quotient of the braid group $B_{2g+2}$. This correspondence leads to a number of interesting relations, for example: \emph{braid} and \emph{chain relations}, relations with hyperelliptic involution, relations with elements of finite order. Later Johnson \cite{John1} discovered the so-called \emph{lantern relation}, which apparently has been used by Dehn in 1920's. It turned out that this set of relations was enough to give a full presentation of ${\cal{M}}(S_g)$, which was obtained by Wajnryb \cite{Wajn_pre}. Later some other relations were discovered, for example \emph{star relations} or relations between fundamental elements in Artin groups embedded in ${\cal{M}}(S_g)$. These relations led to some other interesting presentations of ${\cal{M}}(S_g)$ -- see \cite{Gervais_top,MatsumotoPres}.

In the nonorientable case, Lickorish \cite{Lick3} first observed that Dehn twists do not generate 
the mapping class group ${\cal{M}}(N_{g})$ for $g\geq 2$. More precisely, he proved that Dehn twists 
generate the so-called \emph{twist subgroup} ${\cal{T}}(N_g)$ which is of index 2 in 
${\cal{M}}(N_{g})$. Later Chillingworth \cite{Chil} found finite generating sets for 
${\cal{T}}(N_{g})$ and ${\cal{M}}(N_{g})$. These generating sets were extended to the case of a surface
with punctures and/or boundary components in \cite{Kork-non, Stukow_SurBg, Stukow_HomTw}.

As for relations, recently Paris and Szepietowski \cite{SzepParis} obtained a finite presentations for groups 
${\cal{M}}(N_{g,s})$ where $s\in\{0,1\}$ and $g+s>3$.
\subsection{Main results}
The main goal of this paper is to find a complete set of relations between Dehn twists on a nonorientable surface $N$. To be more precise, we obtain a presentation for the twist subgroup 
${\cal{T}}(N_{g,s})$ of the mapping class group ${\cal{M}}(N_{g,s})$ of a nonorientable surface (Theorems \ref{PresTwist1} and \ref{PresTwist2}), where $s\in\{0,1\}$ and $g+s>3$. The obtained presentations may seem to be complicated, but many relations are needed only for small genera and stably the presentations are quite simple.

Our starting point is the presentation of ${\cal{M}}(N_{g,s})$ obtained by Paris and Szepietowski \cite{SzepParis}, 
however their presentation has $g-1$ generators which are not elements of ${\cal{T}}(N_{g,s})$, hence 
it leads to a very complicated presentation of the twist subgroup. Therefore, we use a recent simplification of their presentation \cite{StukowSimpSzepPar}, which has only one generator 
not belonging to ${\cal{T}}(N_{g,s})$ (Theorems \ref{SimParSzep1}, \ref{SimParSzep2} and \ref{Uw:F}). 
\section{Preliminaries}
\subsection{Notation}
Let us represent surfaces $N_{g,0}$ and $N_{g,1}$ as respectively a sphere or a disc with $g$ crosscaps and 
let $\alpha_1,\ldots,\alpha_{g-1},\beta$ be two-sided circles indicated in Figure~\ref{r01}. 
\begin{figure}[h]
\begin{center}
\includegraphics[width=0.95\textwidth]{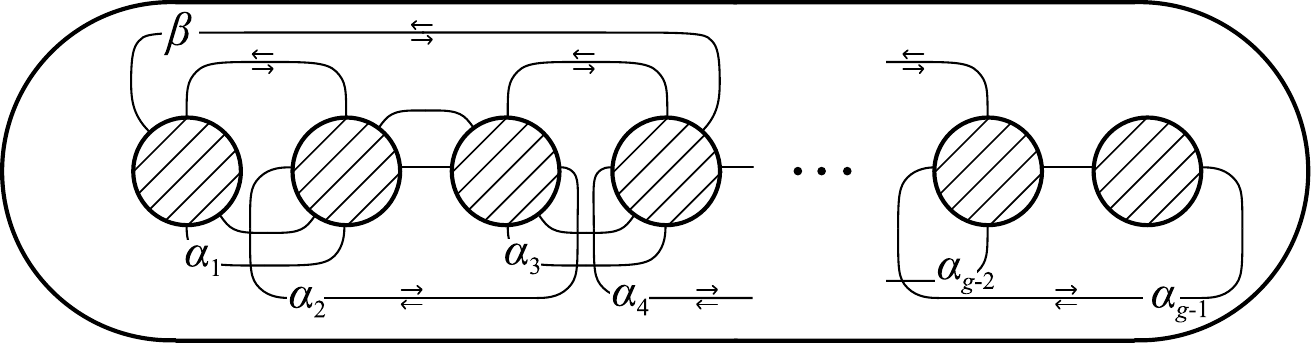}
\caption{Surface $N$ as a sphere/disc with crosscaps.}\label{r01} %
\end{center}
\end{figure}
Small arrows in this figure indicate directions of Dehn twists $a_1,\ldots,a_{g-1},b$ associated with these circles. 
Observe that $\beta$ (hence also $b$) is defined only if $g\geq 4$. From now on whenever we use $b$, we silently assume that
$g\geq 4$.

Moreover, for any unoriented one-sided circle $\mu$ and oriented two-sided circle $\alpha$ which intersects $\mu$ in one point (Figure \ref{r03}), we define a \emph{crosscap slide} (or Y-homeomorphism) $Y_{\mu,\alpha}$, that is the effect of pushing $\mu$ along the curve $\alpha$ -- for precise definition see Section 2.2 of \cite{SzepParis}. 
\begin{figure}[h]
\begin{center}
\includegraphics[width=0.7\textwidth]{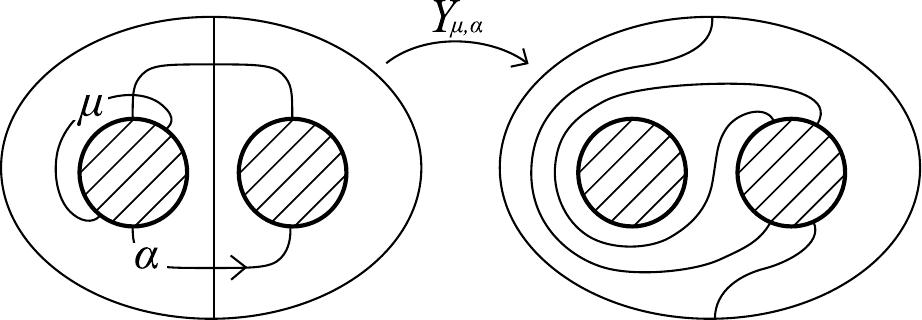}
\caption{Crosscap slide.}\label{r03} %
\end{center}
\end{figure}
In particular, let $y=Y_{\mu_1,\alpha_1}$, where $\mu_1,\alpha_1$ are curves indicated in Figure \ref{r04}. 
\begin{figure}[h]
\begin{center}
\includegraphics[width=0.9\textwidth]{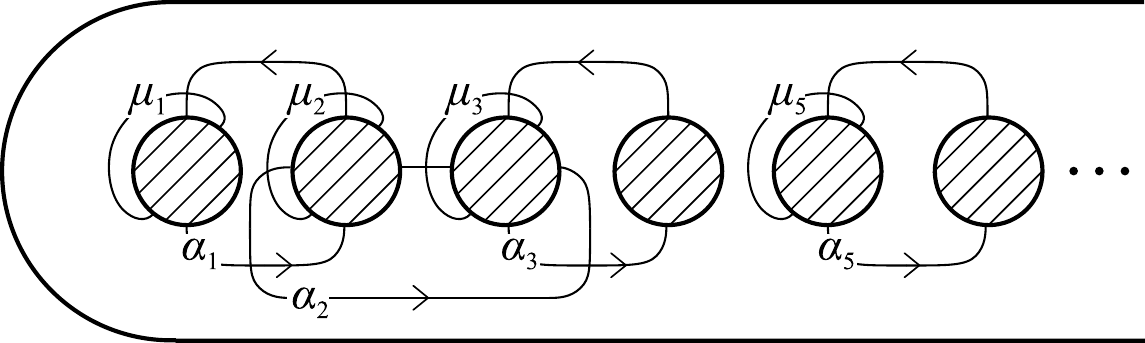}
\caption{Circles $\mu_i$ and $\alpha_i$.}\label{r04} %
\end{center}
\end{figure}

The following three theorems are the main results of \cite{StukowSimpSzepPar}
\begin{tw} \label{SimParSzep1}
 If $g\geq 3$ is odd or $g=4$, then ${\cal{M}}(N_{g,1})$ admits a presentation with generators $a_1,\ldots,a_{g-1},y$ and $b$
for
$g\geq 4$. The defining relations are 
\begin{enumerate} 
\item[(A1)] $a_ia_j=a_ja_i$\quad for $g\geq 4$, $|i-j|>1$,
\item[(A2)] $a_ia_{i+1}a_i=a_{i+1}a_ia_{i+1}$\quad for $i=1,\ldots,g-2$,
 \item[(A3)] $a_ib=ba_i$\quad for $g\geq 4,i\neq 4$,
 \item[(A4)] $ba_4b=a_4ba_4$\quad for $g\geq 5$,
 \item[(A5)] $(a_2a_3a_4b)^{10}=(a_1a_2a_3a_4b)^6$\quad for $g\geq 5$,
 \item[(A6)] $(a_2a_3a_4a_5a_6b)^{12}=(a_1a_2a_3a_4a_5a_6b)^{9}$\quad for $g\geq 7$,
 \item[(B1)] $y(a_2a_3a_1a_2ya_2^{-1}a_1^{-1}a_3^{-1}a_2^{-1})=(a_2a_3a_1a_2ya_2^{-1}a_1^{-1}a_3^{-1}a_2^{-1})y$
\quad for $g\geq 4$,
 \item[(B2)]  $y(a_2a_1y^{-1}a_2^{-1}ya_1a_2)y=a_1(a_2a_1y^{-1}a_2^{-1}ya_1a_2)a_1$,
\item[(B3)] $a_iy=ya_i$ for $g\geq 4$, $i=3,4,\ldots,g-1$,
\item[(B4)] $a_2(ya_2y^{-1})=(ya_2y^{-1})a_2$,
\item[(B5)] $ya_1=a_1^{-1}y$,
\item[(B6)] $byby^{-1}=[a_1a_2a_3(y^{-1}a_2y)a_3^{-1}a_2^{-1}a_1^{-1}][a_2^{-1}a_3^{-1}(ya_2y^{-1})a_3a_2]$
\quad for $g\geq 4$,
\item[(B7)] $(a_4a_5a_3a_4a_2a_3a_1a_2ya_2^{-1}a_1^{-1}a_3^{-1}a_2^{-1}a_4^{-1}a_3^{-1}a_5^{-1}a_4^{-1})b=\\
b(a_4a_5a_3a_4a_2a_3a_1a_2ya_2^{-1}a_1^{-1}a_3^{-1}a_2^{-1}a_4^{-1}a_3^{-1}a_5^{-1}a_4^{-1})$
\quad for $g\geq 6$,
\item[(B8)]
$[(ya_1^{-1}a_2^{-1}a_3^{-1}a_4^{-1})b(a_4a_3a_2a_1y^{-1})][(a_1^{-1}a_2^{-1}a_3^{-1}a_4^{-1})b^{-1}(a_4a_3a_2a_1)]=\\
\ [(a_4^{-1 } a_3^{-1}a_2^{-1})y(a_2a_3a_4)][a_3^{-1}a_2^{-1}y^{-1}a_2a_3]
[a_2^{-1}ya_2]y^{-1}$ \quad for $g\geq~5$.
\end{enumerate}
If $g\geq 6$ is even, then ${\cal{M}}(N_{g,1})$ admits a presentation with generators
$a_1,\ldots,a_{g-1}$, $y$, $b$ and additionally $b_0,b_1,\ldots,b_{\frac{g-2}{2}}$. The defining relations 
are relations (A1)--(A6), (B1)--(B8) above and additionally
\begin{enumerate}
 \item[(A7)] $b_0=a_1, b_1=b$,
\item[(A8)] $b_{i+1}=(b_{i-1}a_{2i}a_{2i+1}a_{2i+2}a_{2i+3}b_{i})^5(b_{i-1}a_{2i}a_{2i+1}a_{2i+2}a_{2i+3})^{-6}$\quad\\	  for
$1\leq i\leq \frac{g-4}{2}$,
\item[(A9a)] $b_2b=bb_2$\quad for $g=6$,
\item[(A9b)] $b_{\frac{g-2}{2}}a_{g-5}=a_{g-5}b_{\frac{g-2}{2}}$\quad for $g\geq 8$. \qed
\end{enumerate}
\end{tw}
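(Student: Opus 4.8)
The plan is to derive this presentation from the one of Paris and Szepietowski by a sequence of Tietze transformations, the whole point being to trade their $g-1$ crosscap-slide generators for the single slide $y$. Their generating set consists of the twists $a_1,\ldots,a_{g-1},b$ together with crosscap slides $y_1,\ldots,y_{g-1}$ associated to configurations $(\mu_i,\alpha_i)$ as in Figure \ref{r04}, with $y=y_1$. The first step is geometric: the pairs $(\mu_i,\alpha_i)$ and $(\mu_{i+1},\alpha_{i+1})$ differ by a diffeomorphism supported near the $i$-th and $(i+1)$-st crosscaps that is a product of the $a_j$, so each $y_{i+1}$ equals a word $w_iy_iw_i^{-1}$ with $w_i$ a product of Dehn twists. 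Chaining these identities expresses every $y_i$ as an explicit word in $y$ and $a_1,\ldots,a_{g-1}$.

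Granting these expressions as relations (a legitimate Tietze move, since they hold in the group), I would then use them to eliminate $y_2,\ldots,y_{g-1}$ from the generating set, leaving $a_1,\ldots,a_{g-1},b,y$. The substantive work is to substitute these words into every remaining defining relation of Paris and Szepietowski and to show that each rewritten relation is either one of (A1)--(A6), (B1)--(B8) or a consequence of them. The relations among twists alone collapse to the braid relations (A1)--(A2) and to (A3)--(A6); the relations coupling a slide to the twists produce, after substitution and conjugation, the long words (B1)--(B2) and (B6)--(B8) together with the commutations (B3)--(B4), while the defining anticommutation of a slide with the twist about the curve along which it is pushed appears as (B5), namely $ya_1=a_1^{-1}y$.

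The hard part is the redundancy check: after the substitutions many of the original relations become superfluous, and proving this means deriving each of them purely algebraically from the retained set, repeatedly invoking the braid relations and the slide relations (B3)--(B5). For even $g$ this elimination cannot be carried out cleanly, because a chain-relation phenomenon occurring only in even genus obstructs writing certain powers directly in terms of $a_1,\ldots,a_{g-1},b,y$; the tidiest resolution is to reintroduce auxiliary twist elements $b_0,\ldots,b_{\frac{g-2}{2}}$ governed by the recursion (A8), anchored by (A7) and closed off by the terminal commutations (A9a)/(A9b). Checking that this enlarged but still finite list is complete, so that nothing beyond (A1)--(A9) and (B1)--(B8) is required to present $\cM(N_{g,1})$, is where the bulk of the computation resides and is the main obstacle.
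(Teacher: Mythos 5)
The first thing to note is that the paper contains no proof of this statement: Theorem \ref{SimParSzep1} is one of ``the main results of \cite{StukowSimpSzepPar}'' and is quoted here without argument, so there is nothing internal to compare your attempt against. That said, your outline does match the strategy of that reference: start from the Paris--Szepietowski presentation, whose generators include a crosscap slide $Y_{\mu_i,\alpha_i}$ for each $i$, use the conjugation relations $y_{i+1}=w_i\,y_i^{\pm 1}w_i^{-1}$ (with $w_i$ a word in the $a_j$) to express every slide in terms of $y=y_1$ and Dehn twists, eliminate $y_2,\dots,y_{g-1}$ by Tietze transformations, and verify that each rewritten relation is a consequence of the retained list. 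The element $z_k$ introduced later in the present paper, described as ``the product of crosscap slides $yY_{\mu_k,\alpha_k}^{\pm 1}$,'' is precisely the residue of that elimination, which confirms your reading of the geometry.

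As a proof, however, what you have written has a genuine gap: every step that actually needs to be checked is deferred. You never exhibit the conjugating words $w_i$, never substitute them into the Paris--Szepietowski relations, and never derive the specific forms of (B1)--(B8); in particular the long relations (B6)--(B8), whose precise shape is the entire content of the theorem, are not obtained from anything, and the completeness of the reduced relation set -- which you yourself identify as ``where the bulk of the computation resides'' -- is simply asserted to be the hard part and left undone. Your account of the even-genus case is also slightly off: the generators $b_0,\dots,b_{\frac{g-2}{2}}$ and relations (A7)--(A9) are not auxiliary elements reintroduced to repair an obstruction created by the elimination; they are already present in the Paris--Szepietowski presentation for even $g$ and simply survive the rewriting. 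In short, the plan is the correct one and coincides with the cited source's approach, but what you have is a roadmap rather than a proof.
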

\begin{tw} \label{SimParSzep2}
 If $g\geq 4$, then the group ${\cal M}(N_{g,0})$ is isomorphic to the quotient of the group ${\cal M}(N_{g,1})$ with
presentation given in Theorem \ref{SimParSzep1} obtained by adding a generator $\ro$ and relations
\begin{itemize}
 \item[(C1a)] $(a_1a_2\cdots a_{g-1})^g=\ro$\quad for $g$ odd,
 \item[(C1b)] $(a_1a_2\cdots a_{g-1})^g=1$\quad for $g$ even,
\item[$(\text{C2}_1)$] $\ro a_1=a_1\ro$,
\item[(C3)] $\ro^2=1$,
\item[(C4a)] $(y^{-1}a_2a_3\cdots a_{g-1}ya_2a_3\cdots a_{g-1})^{\frac{g-1}{2}}=1$\quad for $g$ odd,
\item[(C4b)] $(y^{-1}a_2a_3\cdots a_{g-1}ya_2a_3\cdots a_{g-1})^{\frac{g-2}{2}}y^{-1}a_2a_3\cdots a_{g-1}=\ro$\quad for $g$ even. \qed
\end{itemize}
\end{tw}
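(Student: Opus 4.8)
The plan is to realise $\cM(N_{g,0})$ as an explicit quotient of $\cM(N_{g,1})$ and then to identify the kernel with the relations listed. Capping the boundary of $N_{g,1}$ with a disc induces a homomorphism $\map{\Phi}{\cM(N_{g,1})}{\cM(N_{g,0})}$, which is surjective because the generators $\lst{a}{g-1},y,b$ are supported away from a disc and map onto the corresponding twists and crosscap slide that generate $\cM(N_{g,0})$. Thus $\cM(N_{g,0})\cong\cM(N_{g,1})/\ker\Phi$, and by the standard fact about presenting quotients the whole problem reduces to showing that, modulo the relations of Theorem \ref{SimParSzep1}, the subgroup $\ker\Phi$ is normally generated by the words occurring in (C1)--(C4). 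Note that $\ro$ is only a convenient name: for $g$ odd it is defined by (C1a) and for $g$ even by (C4b), so adjoining it with its defining relation is a Tietze move that does not change the group; the genuine content lies in (C1b), $(\mathrm{C2}_1)$, (C3) and (C4a), together with the claim that $\ro$ is well defined.

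To compute $\ker\Phi$ I would factor $\Phi$ through the Birman exact sequence. Collapsing the boundary to a marked point gives $\cM(N_{g,1})\to\cM(N_g,*)$ with kernel the infinite cyclic group $\gen{t_\partial}$ generated by the boundary twist, and forgetting the marked point gives $\cM(N_g,*)\to\cM(N_{g,0})$ with kernel the image of the point-pushing map $\map{\mathrm{Push}}{\pi_1(N_{g,0},*)}{\cM(N_g,*)}$ (injective here, since $g\geq 4$ lies outside the low-genus exceptional range and $\pi_1(N_{g,0})$ is centreless). Hence $\ker\Phi$ is normally generated by $t_\partial$ together with lifts of $\mathrm{Push}(c)$ as $c$ ranges over a generating set of $\pi_1(N_{g,0})$, and the two groups of relations in the statement should correspond exactly to these two pieces.

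The next step is to rewrite these abstract kernel elements as the explicit words of (C1)--(C4). For the boundary twist I would apply the chain relation to $\alpha_1,\dotsc,\alpha_{g-1}$, expressing $(a_1a_2\cdots a_{g-1})^g$ through the twist(s) about the boundary of a regular neighbourhood of the chain; tracking these boundary curves through $\Phi$ — one curve when $g$ is odd, two when $g$ is even — yields (C1a) and (C1b), with the odd case producing the order-two element $\ro$. For the point-pushing part I would express $\mathrm{Push}$ along the standard one-sided generators of $\pi_1(N_{g,0})$ in terms of the crosscap slide $y$ and the sub-chain twist $a_2a_3\cdots a_{g-1}$; the single defining relation $c_1^2\cdots c_g^2=1$ of $\pi_1(N_{g,0})$ then becomes the order condition on $W=y^{-1}a_2a_3\cdots a_{g-1}ya_2a_3\cdots a_{g-1}$ recorded in (C4a)/(C4b). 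Relations $(\mathrm{C2}_1)$ and (C3) I would verify directly, $\ro$ being an involution that commutes with $a_1$.

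The hard part will be twofold. First, matching the point-pushing elements with the precise word $W$ and pinning down the exponents $\frac{g-1}{2}$ and $\frac{g-2}{2}$ requires an orientation-sensitive computation of how crosscap slides drag a marked point around one-sided loops, together with the correct identification of the chain-neighbourhood boundary curves under capping; the nonorientability makes this markedly more delicate than the classical chain/lantern bookkeeping. Second, and more seriously, I must show that (C1)--(C4) are not merely valid but \emph{sufficient}, i.e.\ that $\ker\Phi$ needs no further normal generators. Rather than arguing this intrinsically, the cleanest route is to compare with the already-known presentation of $\cM(N_{g,0})$ of Paris and Szepietowski \cite{SzepParis}: I would perform Tietze transformations converting their generators into $\lst{a}{g-1},y,b,\ro$ and check that the relation set collapses exactly onto the relations of Theorem \ref{SimParSzep1} augmented by (C1)--(C4). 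This final comparison is the technical heart of the argument.
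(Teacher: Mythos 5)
The first thing to say is that the paper contains no proof of Theorem \ref{SimParSzep2} at all: it is quoted, with its companion Theorems \ref{SimParSzep1} and \ref{Uw:F}, as a result of \cite{StukowSimpSzepPar}, where it is obtained by Tietze transformations from the Paris--Szepietowski presentation of $\cM(N_{g,0})$ in \cite{SzepParis}. So the endgame you propose --- converting the known presentation of $\cM(N_{g,0})$ into the generators $\lst{a}{g-1},y,b,\ro$ and checking that the relations collapse onto those of Theorem \ref{SimParSzep1} plus (C1)--(C4) --- is in substance the actual proof, while your Birman-exact-sequence analysis is a genuinely different, more intrinsic route. The trouble is that as written the two halves are redundant with each other: if sufficiency is to be settled by the Tietze comparison, the computation of $\ker\Phi$ via $\gen{t_\partial}$ and point-pushing is not needed; conversely, if you carry out the kernel computation honestly (normal generation, plus rewriting the normal generators in the given generating set), sufficiency comes for free and the comparison with \cite{SzepParis} is superfluous. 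You currently have two half-arguments rather than one whole one.

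Beyond that structural issue, every substantive verification is deferred, and at least one of your geometric identifications looks off target. (i) That $(a_1\cdots a_{g-1})^g$ is the boundary twist of the chain neighbourhood is fine, but (C3) is a real relation to be proved, not a formality: a Dehn twist about a curve bounding a M\"obius band is not obviously of order two, so ``$\ro$ being an involution'' cannot simply be ``verified directly''. (ii) Your reading of (C4a)/(C4b) as the image of the surface-group relator under point-pushing is a guess; Theorem \ref{Uw:F} shows (C4) is equivalent to $(y\ro a_2a_3\cdots a_{g-1})^{g-1}=1$, i.e.\ to the assertion that a specific element has finite order, which points to a torsion relation realised by a finite-order diffeomorphism of $N_{g,0}$ rather than to point-pushing, and in any case the exponents $\tfrac{g-1}{2}$, $\tfrac{g-2}{2}$ and the element $\ro$ appearing on the right of (C4b) are exactly the delicate bookkeeping you acknowledge not having done. (iii) The Tietze comparison itself is untouched. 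The plan is viable and correctly isolates where the work lies, but none of the work that distinguishes a proof from a plausible strategy has been carried out.
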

\begin{tw}\label{Uw:F}
 Relations (C4a), (C4b) and $\text{(C2}_\text{1}\text{)}$ in the presentation given by Theorem \ref{SimParSzep2} may be replaced by 
\begin{itemize}
\item[(C2)] $\ro a_i=a_i\ro$\quad for $i=1,\ldots,g-1$,
\item[(C5)] $y\ro=\ro y^{-1}$,
 \item[(C4)] $(y\ro a_2a_3\cdots a_{g-1})^{g-1}=1$. \qed
\end{itemize}
\end{tw}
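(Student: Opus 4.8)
The plan is to show, via Tietze transformations, that modulo the relations retained from Theorems \ref{SimParSzep1} and \ref{SimParSzep2} (in particular $\ro^2=1$ from (C3), the braid relations (A1)--(A2), and the defining relation (C1a)/(C1b) for $\ro$) the two relation sets $\{$(C2$_1$), (C4a), (C4b)$\}$ and $\{$(C2), (C5), (C4)$\}$ have the same normal closure. I would prove the two implications separately. Throughout I write $w=a_2a_3\cdots a_{g-1}$ and $t=a_1a_2\cdots a_{g-1}$.

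First I would record the elementary but crucial consequence of (A1)--(A2) that $t$ shifts the chain, $ta_it^{-1}=a_{i+1}$ for $1\le i\le g-2$ (a standard computation using only the braid and far-commutation relations). When $g$ is odd this upgrades (C2$_1$) to (C2) at once: since $\ro=t^g$ by (C1a) commutes with $t$, conjugation by $\ro$ commutes with the shift, so $\ro a_1\ro^{-1}=a_1$ propagates inductively to $\ro a_i\ro^{-1}=a_i$ for every $i$. For odd $g$ the content of (C5) is $\ro y\ro^{-1}=y^{-1}$, which I would extract from the $B$-relations governing the interaction of $y$ with the chain, i.e.\ that $\ro$ is the involution reversing $y$.

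The heart of the equivalence is the computation of $(y\ro w)^2$. Using $\ro^2=1$, the commutation $\ro w=w\ro$ from (C2), and the rewritten form $\ro y=y^{-1}\ro$ of (C5), one obtains $(y\ro w)^2=ywy^{-1}w$. Hence for $g$ odd $(y\ro w)^{g-1}=(ywy^{-1}w)^{(g-1)/2}$, which is conjugate (by a cyclic rotation of the word) to $(y^{-1}wyw)^{(g-1)/2}$; since triviality is a conjugacy invariant, (C4) is equivalent to (C4a). For $g$ even, setting $(y\ro w)^{g-1}=1$ and moving $\ro$ past $w$ rearranges it to $(ywy^{-1}w)^{(g-2)/2}yw=\ro$; conjugating this identity by $\ro$, which fixes $w$ and $\ro$ and sends $y\mapsto y^{-1}$, turns it into exactly (C4b). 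This settles (C4)$\Leftrightarrow$(C4a)/(C4b) uniformly in the parity of $g$, and running the same computation backwards gives the converse direction, while (C2$_1$) is of course a special case of (C2).

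The main obstacle is the even case. There $\ro$ is not $t^g$ (which is trivial by (C1b)) but is defined implicitly by (C4b), so neither (C2) nor (C5) follows from the clean shift argument above. To derive them I would start from the explicit expression $\ro=(y^{-1}wyw)^{(g-2)/2}y^{-1}w$ and use the geometric identification of $\ro$ as the symmetry involution of the crosscap model, verifying directly that this word commutes with each $a_i$ and conjugates $y$ to $y^{-1}$. Establishing these two facts for the implicitly defined $\ro$, and keeping the parity-dependent exponent bookkeeping in the $(y\ro w)^{g-1}$ computation correct, is where the real work lies; everything else is formal manipulation of the presentation.
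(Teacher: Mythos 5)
First, a caveat on the comparison itself: this paper does not prove Theorem \ref{Uw:F} --- it is stated with Theorems \ref{SimParSzep1} and \ref{SimParSzep2} as a result imported from \cite{StukowSimpSzepPar}, so there is no in-paper proof to measure your argument against. Judged on its own terms, your proposal gets the formal skeleton right: the identity $ta_it^{-1}=a_{i+1}$ and the resulting upgrade of (C2$_1$) to (C2) for odd $g$ are correct, and the computation $(y\ro w)^2=ywy^{-1}w$ (using (C2), (C5), (C3)) together with the cyclic-conjugation and $\ro$-conjugation arguments does establish that, \emph{in the presence of} (C2), (C5) and (C3), relation (C4) is interchangeable with (C4a) resp.\ (C4b). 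The direction ``new relations imply old ones'' is therefore essentially complete.

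The genuine gap is the other direction, and you have not closed it. To replace the old relations by the new ones you must derive (C5) --- and, for even $g$, also (C2) --- from the presentation of Theorem \ref{SimParSzep2}. For odd $g$ you dispose of (C5) with one sentence (``I would extract [it] from the $B$-relations\ldots''): this is a statement of the goal, not an argument; showing that $(a_1\cdots a_{g-1})^g$ inverts $y$ modulo (A1)--(A6), (B1)--(B8) is a substantial computation and is precisely the content being claimed. For even $g$ you propose to verify that the word $(y^{-1}wyw)^{(g-2)/2}y^{-1}w$ commutes with each $a_i$ and inverts $y$ ``geometrically.'' That cannot be combined with your stated framework of showing the two relation sets have the same normal closure: a geometric verification only shows the new relations hold in $\cM(N_{g,0})$, not that they lie in the normal closure of the old ones. (A two-surjection argument --- geometric validity of the new relations in $\cM(N_{g,0})$ plus your algebraic derivation of the old relations from the new --- would in fact suffice, but you neither invoke this logic nor carry out the geometric verification, and the identification of the implicitly defined $\ro$ with a concrete symmetry of the crosscap model is itself something that must be proved, e.g.\ via \cite{SzepParis}.) Until (C5), and (C2) in the even case, are actually established from the original presentation, the proof is incomplete at its central point.
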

\section{Presentation for the twist subgroup}
Recall that for $s\leq 1$ and $g\geq 3$ the twist subgroup ${\cal{T}}(N_{g,s})$ has index 2 in ${\cal{M}}(N_{g,s})$
(for details see \cite{Lick3,Stukow_HomTw}), 
hence we can obtain its presentation using Reidemeister--Schreier rewriting process. To be more precise,
we define a Schreier transversal $U=\{1,y\}$ for ${\cal{T}}(N_{g,s})$ in ${\cal{M}}(N_{g,s})$ and 
for any $h\in {\cal{M}}(N_{g,s})$ we define
\[\kre{h}=\begin{cases}
           1&\text{if $h\in {\cal{T}}(N_{g,s})$}\\
           y&\text{if $h\not\in {\cal{T}}(N_{g,s})$.}
          \end{cases}
\]
The Reidemeister--Schreier theorem states that ${\cal{T}}(N_{g,s})$ admits a presentation with 
generators $ux\kre{ux}^{-1}$, where $x$ is a generator of ${\cal{M}}(N_{g,s})$, $u\in U$ 
and $ux\not \in U$. The set of defining relations consists of relations of the form $uru^{-1}$, where $u\in U$ and 
$r$ is a defining relation for ${\cal{M}}(N_{g,s})$.
\begin{tw} \label{PresTwist1}
 If $g\geq 3$ is odd or $g=4$, then ${\cal{T}}(N_{g,1})$ admits a presentation with generators 
 $a_1,\ldots,a_{g-1},e,f,y^2$ and $b,c$ for
$g\geq 4$. The defining relations are (A1)--(A6) and 
\begin{itemize}
 \item[$(\kre{\text{A1}}_1)$] $ea_j=a_je$\quad for $g\geq 5$, $j\geq 4$,
 \item[$(\kre{\text{A1}}_2)$] $fa_j=a_jf$\quad for $g\geq 5$, $j\geq 4$,
 \item[$(\kre{\text{A2}}_1)$] $a_1ea_1=ea_1e$,
 \item[$(\kre{\text{A2}}_2)$] $a_3^{-1}ea_3^{-1}=ea_3^{-1}e$\quad for $g\geq 4$,
 \item[$(\kre{\text{A2}}_3)$] $a_1fa_1=fa_1f$,
 \item[$(\kre{\text{A3}}_1)$] $a_1c=ca_1$\quad for $g=4,5$,
 \item[$(\kre{\text{A3}}_2)$] $ec=ce$\quad for $g=4,5$,
 \item[$(\kre{\text{A4}})$] $ca_4c=a_4ca_4$\quad for $g=5,6$,
 \item[$(\kre{\text{A5}})$] $(e^{-1}a_3a_4c)^{10}=(a_1^{-1}e^{-1}a_3a_4c)^6$\quad for $g=5,6$,
 \item[$(\kre{\text{A6}})$] $(e^{-1}a_3a_4a_5a_6c)^{12}=(a_1^{-1}e^{-1}a_3a_4a_5a_6c)^{9}$\quad for $g=7,8$,
 \item[$(\kre{\text{B1}})$] $(a_2a_3a_1a_2ea_1a_3^{-1}e)(a_2a_3a_1a_2fa_1a_3^{-1}f)=1$\quad for $g\geq 4$, 
 \item[$(\kre{\text{B2}}_1)$] $y^2=a_2a_1ea_1a_2a_1a_2a_1a_2fa_1a_2$,
 \item[$(\kre{\text{B2}}_2)$] $(a_2a_1ea_1a_2a_1a_2a_1a_2fa_1a_2)(a_2a_1fa_1a_2a_1a_2a_1a_2ea_1a_2)=1$,
 \item[$(\kre{\text{B3}})$] $y^2a_3=a_3y^2$\quad for $g\geq 4$,
 \item[$(\kre{\text{B4}}_1)$] $ea_2=a_2e$,
 \item[$(\kre{\text{B4}}_2)$] $fa_2=a_2f$,
 \item[$(\kre{\text{B6}}_1)$] $bc=[a_1a_2a_3f^{-1}a_3^{-1}a_2^{-1}a_1^{-1}][a_2^{-1}a_3^{-1}e^{-1}a_3a_2]$\quad for $g\geq 4$,
 \item[$(\kre{\text{B6}}_2)$] $c(y^2by^{-2})=[a_1^{-1}e^{-1}a_3a_2a_3^{-1}ea_1][ea_3^{-1}(y^2a_2y^{-2})a_3e^{-1}]$\quad for
$g=4,5$,
 \item[$(\kre{\text{B7}}_1)$] $(a_4a_5a_3a_4a_2a_3a_1a_2ea_1a_3^{-1}ea_4^{-1}a_3^{-1}a_5^{-1}a_4^{-1})c=\\
b(a_4a_5a_3a_4a_2a_3a_1a_2ea_1a_3^{-1}ea_4^{-1}a_3^{-1}a_5^{-1}a_4^{-1})$\quad for $g\geq 6$,
\item[$(\kre{\text{B7}}_2)$] $(a_2^{-1}a_1^{-1}a_3^{-1}a_2^{-1}a_4^{-1}a_3^{-1}a_5^{-1}a_4^{-1})b
(a_4a_5a_3a_4a_2a_3a_1a_2)y^2=\\y^2(a_2^{-1}a_1^{-1}a_3^{-1}a_2^{-1}a_4^{-1}a_3^{-1}a_5^{-1}a_4^{-1})b
(a_4a_5a_3a_4a_2a_3a_1a_2)$\quad for $g\geq 6$,
\item[$(\kre{\text{B8}}_1)$]
$\left[(a_1ea_3^{-1}a_4^{-1})c(a_4a_3e^{-1}a_1^{-1})\right]\left[(a_1^{-1}a_2^{-1}a_3^{-1}a_4^{-1})b^{-1}(a_4a_3a_2a_1)\right
] =\\
a_4^{-1}\left[(a_3^{-1}a_2^{-1}e^{-1}a_3)a_4(a_3^{-1}ea_2a_3)\right]a_2^{-1}e^{-1}$\quad for $g\geq 5$,
\item[$(\kre{\text{B8}}_2)$]
$\left[(a_1^{-1}a_2^{-1}a_3^{-1}a_4^{-1})b(a_4a_3a_2a_1)\right]\left[(a_1fa_3^{-1}a_4^{-1})y^{-2}c^{-1}y^2(a_4a_3f^{-1}a_1^{
-1 } )
\right]=\\
a_4^{-1}\left[(a_3^{-1}fa_2a_3)a_4(a_3^{-1}a_2^{-1}f^{-1}a_3)\right]
fa_2$\quad for \mbox{$g=5,6$.}
\end{itemize}
If $g\geq 6$ is even, then ${\cal{T}}(N_{g,1})$ admits a presentation with generators
$a_1,\ldots,a_{g-1}$, $e,f,y^2,b,c$ and additionally $b_0,b_1,\ldots,b_{\frac{g-2}{2}}$,
$\kre{b}_{\frac{g-6}{2}}, \kre{b}_{\frac{g-4}{2}}, \kre{b}_{\frac{g-2}{2}}$. The defining
relations 
are relations (A1)--(A9), ($\kre{\text{A1}}_1$)--($\kre{\text{A6}}$),
($\kre{\text{B1}}$)--($\kre{\text{B8}}_2$) and additionally
\begin{enumerate}
 \item[($\kre{\text{A7a}}$)] $\kre{b}_0=a_1^{-1}$, $\kre{b}_1=c$\quad for $g=6$,
 \item[($\kre{\text{A7b}}$)] $\kre{b}_1=c$\quad for $g=8$,
 \item[($\kre{\text{A7c}}$)] $\kre{b}_{i}=z_{g-1}b_iz_{g-1}^{-1}$\quad where $i=\frac{g-6}{2},\frac{g-4}{2}$, $i\geq 2$ 
 and 
 \[z_{g-1}=(a_{g-1}a_ga_{g-2}a_{g-1}\cdots a_3a_4e^{-1}a_3a_1^{-1}e^{-1})(a_2^{-1}a_1^{-1}\cdots
a_{g-1}^{-1}a_{g-2}^{-1}a_{g}^{-1}a_{g-1}^{-1}),\]
\item[($\kre{\text{A8a}}$)]
$\kre{b}_{2}=(\kre{b}_0e^{-1}a_{3}a_{4}a_{5}\kre{b}_{1})^5(\kre{b}_{0}e^{-1}a_{3}a_{4}a_{5})^{-6}$\quad for $g=6$,
\item[($\kre{\text{A8b}}$)]
$\kre{b}_{\frac{g-2}{2}}=(\kre{b}_{\frac{g-6}{2}}a_{g-4}a_{g-3}a_{g-2}a_{g-1}\kre{b}_{\frac{g-4}{2}})^5(\kre{b}_{\frac{g-6}{2}}
a_{g-4}a_{g-3}a_{g-2}a_{g-1})^{-6}$\quad for \mbox{$g\geq 8$},
\item[($\kre{\text{A9a}}$)] $\kre{b}_2c=c\kre{b}_2$\quad for $g=6$,
\item[($\kre{\text{A9b}}$)] $\kre{b}_{\frac{g-2}{2}}a_{g-5}=a_{g-5}\kre{b}_{\frac{g-2}{2}}$\quad for $g\geq
8$.
\end{enumerate}
\end{tw}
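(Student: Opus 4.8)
The plan is to read the presentation of $\cT(N_{g,1})$ off the presentation of $\cM(N_{g,1})$ in Theorem~\ref{SimParSzep1} by the Reidemeister--Schreier process set up above, with the Schreier transversal $U=\{1,y\}$. All the Dehn twist generators $\lst{a}{g-1},b$ (and, in the even case, the $b_i$) lie in $\cT(N_{g,1})$, while $y\notin\cT(N_{g,1})$; hence the Reidemeister--Schreier generators $ux\kre{ux}^{-1}$ are $a_i$ (from $u=1$), the conjugates $ya_iy^{-1}$ (from $u=y$), together with $b$, $c:=yby^{-1}$ and $y^2$ (and $b_i$, $\kre b_i:=yb_iy^{-1}$ in the even case). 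First I would trim this list: relation (B3) gives $ya_iy^{-1}=a_i$ for $i\geq 3$ and (B5) gives $ya_1y^{-1}=a_1^{-1}$, so every conjugate twist except $ya_2y^{-1}$ is removed by a Tietze transformation. I set $e:=ya_2^{-1}y^{-1}$ (the precise power of $a_2$ being fixed by matching signs in $(\kre{\text{A5}})$), and I adjoin an auxiliary generator $f:=y^{-1}a_2^{-1}y$ with defining relation $e=y^2fy^{-2}$, so that $e$ and $f$ are the two $y$-conjugates of $a_2^{-1}$.

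Next I would rewrite every defining relation of Theorem~\ref{SimParSzep1} in its two Schreier forms, for $u=1$ and $u=y$. The relations (A1)--(A6) are words in $\cT(N_{g,1})$, so their $u=1$ rewriting returns them verbatim, which accounts for the clause ``(A1)--(A6)'' in the statement. Their $u=y$ rewriting conjugates them by $y$; after replacing $ya_iy^{-1}$ by $a_i$ for $i\geq 3$, by $a_1^{-1}$ for $i=1$ and by $e^{-1}$ for $i=2$, and $yby^{-1}$ by $c$, this yields the barred relations $(\kre{\text{A1}}_1)$--$(\kre{\text{A6}})$. For instance $y$-conjugating (A5) gives $(e^{-1}a_3a_4c)^{10}=(a_1^{-1}e^{-1}a_3a_4c)^6$, i.e.\ $(\kre{\text{A5}})$, and $y$-conjugating the braid relations (A2) gives $(\kre{\text{A2}}_1)$ and $(\kre{\text{A2}}_2)$. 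The relations (B1)--(B8) contain $y$ and require the full letter-by-letter rewriting with the coset tracked at each step; collecting the patterns $ya_2^{-1}y^{-1}$ and $y^{-1}a_2^{-1}y$ into $e$ and $f$, the pattern $yby^{-1}$ into $c$, and $y\cdot y$ into $y^2$, turns them into $(\kre{\text{B1}})$--$(\kre{\text{B8}}_2)$, the two Schreier copies of (B2) producing both the expression $(\kre{\text{B2}}_1)$ of $y^2$ in $e,f,a_1,a_2$ and its companion $(\kre{\text{B2}}_2)$; the relations involving the second conjugate $f$, such as $(\kre{\text{A1}}_2)$, $(\kre{\text{A2}}_3)$ and $(\kre{\text{B4}}_2)$, emerge from the same rewriting once $f=y^{-1}a_2^{-1}y$ is substituted. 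In the even case the $b_i$ are handled identically: the $u=1$ copies reproduce (A7)--(A9), and the $u=y$ copies give $(\kre{\text{A7a}})$--$(\kre{\text{A9b}})$, the low conjugates collapsing to $\kre b_0=a_1^{-1}$ and $\kre b_1=c$ and only $\kre b_{(g-6)/2},\kre b_{(g-4)/2},\kre b_{(g-2)/2}$ surviving.

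The last and most delicate step is the Tietze reduction of this raw list to the stated one. Two things must be verified. First, most of the $u=y$ relations are redundant once the genus is large: the $y$-conjugate of a braid or commutation relation all of whose indices are $\geq 3$ just repeats a relation of (A1)--(A6), and the conjugates of the mixed relations are absorbed by $(\kre{\text{A1}}_1)$, $(\kre{\text{A1}}_2)$ and (B3) once the index range is wide enough. This is precisely why the barred relations $(\kre{\text{A3}}_1)$--$(\kre{\text{A6}})$, $(\kre{\text{B6}}_2)$ and $(\kre{\text{B8}}_2)$ are listed only for small, explicitly bounded $g$. Second, I must confirm that the adjoined generator $f$ and the retained generator $y^2$ are consistently determined, i.e.\ that the relation $e=y^2fy^{-2}$ is itself a consequence of $(\kre{\text{B2}}_1)$ and $(\kre{\text{B2}}_2)$, so that nothing extra need be imposed.

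I expect the real obstacle to be exactly this redundancy bookkeeping. For each barred relation one has to pin down the precise range of $g$ in which it carries new information and to check that outside that range it follows from the relations already kept; there is no single conceptual shortcut, only a careful case-by-case Tietze analysis. The small-genus boundary values $g=4,5,6,7,8$ have to be treated one at a time, because there the conjugate relations that are generically redundant genuinely contribute and force the special relations $(\kre{\text{A3}}_1)$, $(\kre{\text{A3}}_2)$, $(\kre{\text{A4}})$, $(\kre{\text{A5}})$, $(\kre{\text{A6}})$, $(\kre{\text{B6}}_2)$ and $(\kre{\text{B8}}_2)$.
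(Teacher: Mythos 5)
Your overall route is the same as the paper's: Reidemeister--Schreier with the transversal $U=\{1,y\}$, the same naming of generators ($e=ya_2^{-1}y^{-1}$, $c=yby^{-1}$, $\kre{b}_i=yb_iy^{-1}$, the auxiliary $f=y^{-1}a_2^{-1}y$ with the defining relation $f=y^{-2}ey^2$ later shown redundant), elimination of $ya_iy^{-1}$ for $i\neq 2$ via (B3) and (B5), and a final Tietze reduction. Up to that point the plan is sound.

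The genuine gap is in the step you yourself flag as ``redundancy bookkeeping'': you have no mechanism that actually makes the $y$-conjugated relations involving $b$ superfluous for large $g$. You assert that these conjugates are ``absorbed by $(\kre{\text{A1}}_1)$, $(\kre{\text{A1}}_2)$ and (B3) once the index range is wide enough,'' but those are commutation relations among $e$, $f$ and the $a_j$; they never relate $c$ to $b$. For example $(\kre{\text{A4}})$, i.e.\ $ca_4c=a_4ca_4$, cannot follow from (A4) and commutation relations alone --- one needs an element of $\cT(N_{g,1})$, expressible in the retained generators, that conjugates $b$ to $c$ while fixing $a_4$. The paper supplies exactly this: the elements
\[z_k=(a_{k-1}a_ka_{k-2}a_{k-1}\cdots a_3a_4e^{-1}a_3a_1^{-1}e^{-1})(a_2^{-1}a_1^{-1}\cdots a_{k-1}^{-1}a_{k-2}^{-1}a_{k}^{-1}a_{k-1}^{-1}),\]
which are products of crosscap slides $yY_{\mu_k,\alpha_k}^{\pm1}$ and hence lie in the twist subgroup, and which satisfy (provably from the retained relations) $z_ka_1z_k^{-1}=a_1^{-1}$, $z_ka_2z_k^{-1}=e^{-1}$, $z_ka_iz_k^{-1}=a_i$ for $3\leq i\leq k-2$, $z_kbz_k^{-1}=c$, $z_ky^2z_k^{-1}=y^2$, $z_kfz_k^{-1}=a_2^{-1}$, etc. In words, $z_k$ realizes conjugation by $y$ on everything supported to the left of $\mu_k$. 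The genus thresholds in the statement ($(\kre{\text{A4}})$, $(\kre{\text{A5}})$ only for $g\leq 6$, $(\kre{\text{A6}})$ only for $g\leq 8$, $(\kre{\text{B6}}_2)$ for $g\leq 5$, $(\kre{\text{B8}}_2)$ for $g\leq 6$) are precisely the values below which no admissible $k\leq g-1$ exists, and the same device is what collapses the conjugated generators $\kre{b}_0,\ldots,\kre{b}_{\frac{g-8}{2}}$ and produces $(\kre{\text{A7c}})$. Without this idea (or an equivalent one) your case-by-case Tietze analysis cannot terminate in the stated presentation. A secondary, smaller issue: the heavy explicit rewriting of (B1), (B2), (B7), (B8) and the verification that their second Schreier copies are consequences of the retained relations is left entirely unexecuted in your plan; the paper carries these computations out line by line, and they are where relations such as $(\kre{\text{B2}}_2)$ and $(\kre{\text{B7}}_2)$ actually acquire their stated forms.
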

\begin{proof}
 As noted before, we apply Reidemeister--Schreier theorem to the presentation given by Theorem 
 \ref{SimParSzep1}. Hence as generators of the twist subgroup ${\cal{T}}(N_{g,1})$ we obtain 
 $a_1,\ldots,a_{g-1},ya_1y^{-1},\ldots, ya_{g-1}y^{-1}, y^2$ and $b,yby^{-1}$ for $g\geq 4$. Moreover, if 
 $g\geq 6$ is even, we have additional generators: $b_0,b_1,\ldots,b_{\frac{g-2}{2}}$, 
 $yb_0y^{-1},yb_1y^{-1},\ldots,yb_{\frac{g-2}{2}}y^{-1}$. Let us name some of these generators: 
 \[e=ya_2^{-1}y^{-1},\ c=yby^{-1},\ \kre{b}_i=yb_iy^{-1}\ \text{for  $i=0,\ldots,\frac{g-2}{2}$}.\]
 We also add one generator $f=y^{-1}a_2^{-1}y$ with defining relation
 \begin{itemize}
  \item[(D1)] $f=y^{-2}ey^2$
 \end{itemize}
 (see Figure \ref{r07}).\\
 \begin{figure}[h]
\begin{center}
\includegraphics[width=1\textwidth]{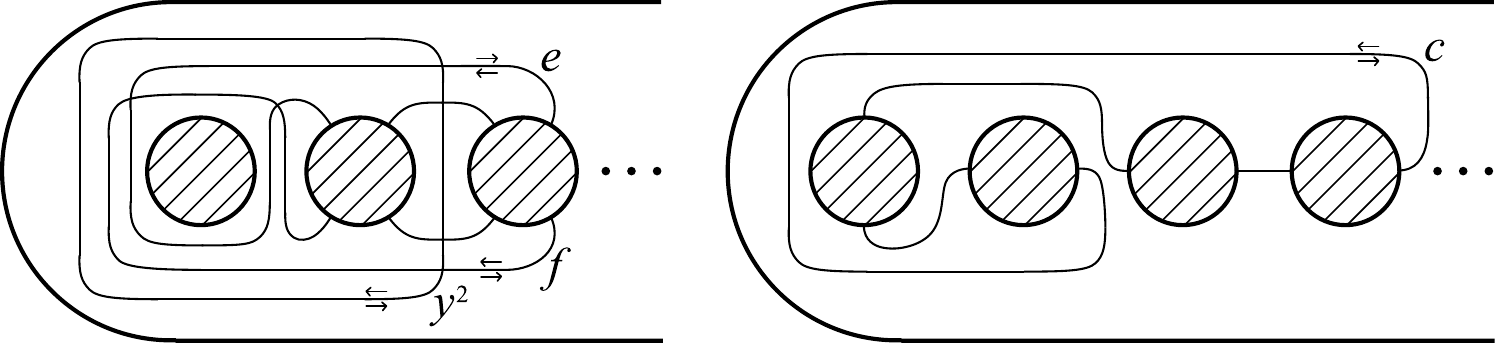}
\caption{Twists $e,f,y^2,c$.}\label{r07} %
\end{center}
\end{figure}
 {\bf (B3)} Observe first that relation (B3) rewrites as
 \[ya_iy^{-1}=a_i\quad\text{for $i=3,4,\ldots,g-1$}.\]
 This means that we can remove generators $ya_3y^{-1},\ldots,ya_{g-1}y^{-1}$ from the presentation, hence
 from now on we will silently identify $ya_iy^{-1}$ with $a_i$ for $i=3,4,\ldots,g-1$.\\
 {\bf (B5)} Similarly, (B5) allows us to identify $ya_1y^{-1}$ with $a_1^{-1}$.
 
 Observe also that conjugations of (B3) and (B5) by $y$ give 
 \begin{itemize}
  \item[$(\kre{\text{B3}})$] $y^2a_i=a_iy^2$\quad for $i=1,3,4,\ldots,g-1$.
 \end{itemize}
We will show later that this relation can be replaced by ($\kre{\text{A1}}_2$) if $i\neq 3$.\\
 {\bf (A1)--(A9)} Relations which do not contain $y$, that is (A1)--(A9) does not need rewriting, however we need to add
their 
 versions conjugated by $y$. This gives relations ($\kre{\text{A1}}_1$), ($\kre{\text{A2}}_1$), ($\kre{\text{A2}}_2$),
 ($\kre{\text{A3}}_2$), ($\kre{\text{A4}}$)--($\kre{\text{A6}}$) and
 \begin{itemize}
  \item[($\kre{\text{A3}}_1$)] $a_ic=ca_i$\quad for $g\geq 4$, $i\neq 2,4$.
 \end{itemize}
 If $g\geq 6$ is even, then we have additionally
 \begin{itemize}
  \item[($\kre{\text{A7}}$)] $\kre{b}_0=a_1^{-1}$, $\kre{b}_1=c$,
\item[($\kre{\text{A8a}}$)] $\kre{b}_{2}=(\kre{b}_0e^{-1}a_{3}a_{4}a_{5}\kre{b}_{1})^5(\kre{b}_{0}e^{-1}a_{3}a_{4}a_{5})^{-6}$,
\item[($\kre{\text{A8b}}$)] 
$\kre{b}_{i+1}=(\kre{b}_{i-1}a_{2i}a_{2i+1}a_{2i+2}a_{2i+3}\kre{b}_{i})^5(\kre{b}_{i-1}a_{2i}a_{2i+1}a_{2i+2}a_{2i+3})^{-6}$ \quad\\	  for
$2\leq i\leq \frac{g-4}{2}$,
\item[($\kre{\text{A9a}}$)] $\kre{b}_2c=c\kre{b}_2$\quad for $g=6$,
\item[($\kre{\text{A9b}}$)] $\kre{b}_{\frac{g-2}{2}}a_{g-5}=a_{g-5}\kre{b}_{\frac{g-2}{2}}$\quad for $g\geq 8$.
  \end{itemize} 
{\bf (B4)} Relation (B4) and its conjugation by $y^{-1}$ rewrite respectively as $(\kre{\text{B4}}_1)$ and
$(\kre{\text{B4}}_2)$. It is also useful to note that relations (D1), $(\kre{\text{B3}})$,
$(\kre{\text{A2}}_1)$ and $(\kre{\text{A2}}_2)$ imply that
\begin{itemize}
 \item[$(\kre{\text{A2}}_3)$] $a_1fa_1=fa_1f$,
 \item[$(\kre{\text{A2}}_4)$] $a_3^{-1}fa_3^{-1}=fa_3^{-1}f$\quad for $g\geq 4$.
\end{itemize}
{\bf (B2)} Using (A2), $(\kre{\text{A2}}_1)$, $(\kre{\text{A2}}_2)$ and $(\kre{\text{B4}}_1)$ we rewrite (B2).
\[\begin{aligned}
   &\SR{\ukre{y}}(a_2a_1\ukre{y^{-1}}a_2^{-1}\SR{y}a_1a_2)y=a_1(a_2a_1y^{-1}a_2^{-1}ya_1a_2)a_1,\\
   &\SL{e^{-1}a_1^{-1}a_2^{-1}a_1^{-1}e^{-1}}y^2=a_1a_2a_1f[a_1a_2a_1],\\
   &y^2=e[a_1a_2a_1]ea_1a_2a_1fa_2a_1a_2,\\
   &y^2=\SR{e}a_2a_1a_2\SL{e}a_1a_2a_1fa_2a_1a_2,\\
   &y^2=a_2[ea_1e]a_2a_1a_2a_1a_2fa_1a_2,\\
   &y^2=a_2a_1ea_1a_2a_1a_2a_1a_2fa_1a_2.
  \end{aligned}
\]
In the above computations we introduced the notation which should help the reader to follow our transformations. The underlined parts indicate expressions which will be reduced, and parts with small arrows indicate expressions which will be moved to the left/right.

As a conjugation of (B2) we can take
\[(a_2a_1y^{-1}a_2^{-1}ya_1a_2)=y^{-1}a_1(a_2a_1y^{-1}a_2^{-1}ya_1a_2)a_1y^{-1}.\]
By a straightforward computation this gives 
\[y^{-2}=a_2a_1fa_1a_2a_1a_2a_1a_2ea_1a_2,\]
which together with $(\kre{\text{B2}}_1)$ gives $(\kre{\text{B2}}_2)$.

Observe that $(\kre{\text{B2}}_1)$ together with (A1) and $(\kre{\text{A1}}_1)$ imply that we can 
replace $(\kre{\text{B3}})$ for $i\geq 4$ with $(\kre{\text{A1}}_2)$.

Observe also that $(\kre{\text{B2}}_1)$, (A2), $(\kre{\text{B4}}_1)$ and $(\kre{\text{B4}}_2)$
 imply that $(\kre{\text{B3}})$ for $i=1$ is superfluous.
 
We will now show that (D1) is superfluous -- we will need here $(\kre{\text{A2}}_3)$, hence we add this relation to the
statement. Using $(\kre{\text{B2}}_1)$ we substitute for $y^2$.
\[\begin{aligned}
&f=(\SL{\ukre{a_2^{-1}}}a_1^{-1}f^{-1}a_2^{-1}a_1^{-1}a_2^{-1}a_1^{-1}a_2^{-1}\ukre{a_1^{-1}}e^{-1}a_1^{-1}\ukre{a_2^{-1}})
   \SL{\ukre{e}}(\ukre{a_2}a_1e\SR{a_1}a_2a_1a_2a_1a_2fa_1\SR{\ukre{a_2}}),\\
   &\ukre{f}=\ukre{(a_1^{-1}f^{-1}a_2^{-1}a_1^{-1}a_2^{-1}a_1^{-1}a_2^{-1}e^{-1}a_1^{-1})}
   \ukre{(a_1ea_2a_1a_2a_1a_2fa_1)}\ukre{f}.   
  \end{aligned}
\]
{\bf (B1)} If we use (B1) in the form
\[(a_2a_3a_1a_2ya_2^{-1}a_1^{-1}a_3^{-1}a_2^{-1})y^{-1}=y^{-1}(a_2a_3a_1a_2ya_2^{-1}a_1^{-1}a_3^{-1}a_2^{-1}),\]
after rewriting we get $(\kre{\text{B1}})$. Conjugating this relation by $y$ gives
\[\begin{aligned}  
(\kre{\text{B1}}_2)\quad&y(a_2a_3a_1a_2\SL{y}a_2^{-1}a_1^{-1}a_3^{-1}a_2^{-1})\SR{y^{-2}}=(a_2a_3a_1a_2\ukre{y}a_2^{-1}a_1^{
-1 } a_3^{-1}a_2^{-1 } )\SL { \ukre{y^{-1}}},\\
   &y^2(f^{-1}a_3a_1^{-1}f^{-1}a_2^{-1}a_1^{-1}a_3^{-1}a_2^{-1})=(a_2a_3a_1a_2ea_1a_3^{-1}e)y^2.
  \end{aligned}
\]
Now we will show that this relation is superfluous -- it is a consequence of relations
(A1), (A2), $(\kre{\text{A2}}_1)$--$(\kre{\text{A2}}_4)$, $(\kre{\text{B1}})$, $(\kre{\text{B2}}_1)$,
$(\kre{\text{B2}}_2)$, 
$(\kre{\text{B4}}_1)$, $(\kre{\text{B4}}_2)$. We substitute for $y^2$ using  $(\kre{\text{B2}}_1)$ and
$(\kre{\text{B2}}_2)$.
\[\begin{aligned}
   &(\ukre{a_2a_1}ea_1a_2a_1\ukre{a_2}a_1a_2fa_1a_2)(\SL{\ukre{f^{-1}}}a_3a_1^{-1}f^{-1}a_2^{-1}\ukre{a_1^{-1}}a_3^{-1}\ukre{a_2^{-1}})=\\
   &\qquad\qquad\quad=(\ukre{a_2}a_3\ukre{a_1}a_2ea_1a_3^{-1}\SR{\ukre{e}})(a_2^{-1}a_1^{-1}e^{-1}\ukre{a_1^{-1}}a_2^{-1}a_1^{-1}a_2^{-1}a_1^{-1}a_2^{-1}f^{-1}\ukre{a_1^{-1}a_2^{-1}}),\\
   &(ea_1a_2\SL{a_1}\SR{a_1}\ukre{a_2}fa_1a_2)(a_3\SL{\ukre{a_1^{-1}}}f^{-1}a_2^{-1}a_3^{-1})=\\
   &\qquad\qquad\qquad\qquad\qquad\quad=(a_3a_2e\SR{\ukre{a_1}}a_3^{-1})(a_2^{-1}a_1^{-1}e^{-1}\ukre{a_2^{-1}}\SL{a_1^{-1}}a_2^{-1}a_1^{-1}\SR{a_2^{-1}}f^{-1}),\\
   &(\SL{e}a_1a_2fa_1a_2)(\ukre{f}a_3f^{-1}a_2^{-1}a_3^{-1})\SL{\ukre{a_2}}=
   \SR{\ukre{a_2^{-1}}}(a_3a_2ea_3^{-1})(\ukre{e^{-1}}a_2^{-1}a_1^{-1}e^{-1}a_2^{-1}a_1^{-1}\SR{f^{-1}}),\\
   &(a_1a_2fa_1a_2)(\ukre{a_3}\SR{f^{-1}}\SL{a_2^{-1}}\ukre{a_3^{-1}}f)=
   (e^{-1}\ukre{a_3}\SR{a_2}\SL{e}\ukre{a_3^{-1}})(a_2^{-1}a_1^{-1}e^{-1}a_2^{-1}a_1^{-1}),\\   
   &a_1a_2fa_1a_3^{-1}\SR{a_2}f\SR{a_3}=
   \SL{a_3^{-1}}e^{-1}\SL{a_2^{-1}}a_3a_1^{-1}e^{-1}a_2^{-1}a_1^{-1},\\
   &a_2a_3a_1a_2fa_1a_3^{-1}f=
   e^{-1}a_3a_1^{-1}e^{-1}a_2^{-1}a_1^{-1}a_3^{-1}a_2^{-1}.
  \end{aligned}
\]
What we get is $(\kre{\text{B1}})$.\\
{\bf (B6)} If we rewrite (B6) we get $(\kre{\text{B6}}_1)$, and (B6) conjugated by $y$ gives $(\kre{\text{B6}}_2)$.\\
{\bf (B7)} If we use (B7) in the form
\[\begin{aligned}&a_4a_5a_3a_4a_2a_3a_1a_2ya_2^{-1}a_1^{-1}a_3^{-1}a_2^{-1}a_4^{-1}a_3^{-1}a_5^{-1}a_4^{-1}by^{-1}=\\
&\qquad\qquad\qquad\qquad\quad =ba_4a_5a_3a_4a_2a_3a_1a_2ya_2^{-1}a_1^{-1}a_3^{-1}a_2^{-1}a_4^{-1}a_3^{-1}a_5^{-1}a_4^{-1}y^{-1},\end{aligned}\]
after rewriting we get $(\kre{\text{B7}}_1)$. By conjugating this relation by $y^{-1}$, taking inverses 
of both sides and using (D1), we get 
\[\begin{aligned}
&(\SL{a_4a_5a_3a_4a_2a_3a_1a_2}fa_1a_3^{-1}fa_4^{-1}a_3^{-1}a_5^{-1}a_4^{-1})\SL{y^{-2}}cy^2=\\
&\qquad\qquad\qquad\qquad\qquad\qquad\qquad\quad =b(a_4a_5a_3a_4a_2a_3a_1a_2\SR{fa_1a_3^{-1}fa_4^{-1}a_3^{-1}a_5^{-1}a_4^{-1}}),\\
&y^{-2}(ea_1a_3^{-1}ea_4^{-1}a_3^{-1}a_5^{-1}a_4^{-1})c(a_4a_5a_3a_4e^{-1}a_3a_1^{-1}e^{-1})y^2=\\
&\qquad\qquad\qquad\qquad\qquad\quad\quad =(a_2^{-1}a_1^{-1}a_3^{-1}a_2^{-1}a_4^{-1}a_3^{-1}a_5^{-1}a_4^{-1})b(a_4a_5a_3a_4a_2a_3a_1a_2).\\
\end{aligned}\]
This together with $(\kre{\text{B7}}_1)$ gives $(\kre{\text{B7}}_2)$. For further reference observe that using
$(\kre{\text{B1}})$ the above relation can be also rewritten as
\[\begin{aligned}
   (\kre{\text{B7}}_3)\qquad&   
   (a_4a_5a_3a_4e^{-1}a_3a_1^{-1}e^{-1})(a_2^{-1}a_1^{-1}a_3^{-1}a_2^{-1}a_4^{-1}a_3^{-1}a_5^{-1}a_4^{-1})y^{-2}cy^2=\\
&\qquad\qquad\quad =b(a_4a_5a_3a_4e^{-1}a_3a_1^{-1}e^{-1})(a_2^{-1}a_1^{-1}a_3^{-1}a_2^{-1}a_4^{-1}a_3^{-1}a_5^{-1}a_4^{-1}).
  \end{aligned}
\]
Observe that we can use $(\kre{\text{B7}}_1)$ and $(\kre{\text{B6}}_1)$ as definitions of $c$. It is straightforward to check
that the first of these relations imply $(\kre{\text{A3}}_2)$ and $(\kre{\text{A3}}_1)$ for $i=1$. The second one imply 
$(\kre{\text{A3}}_1)$ for $i=3$ and $i\geq 5$.\\
{\bf (B8)} If we rewrite (B8) we get $(\kre{\text{B8}}_1)$ and (B8) conjugated by $y^{-1}$ gives 
$(\kre{\text{B8}}_2)$.
{\bf Further reductions.}
For any $3\leq k\leq g-1$ define
\[z_k=(a_{k-1}a_ka_{k-2}a_{k-1}\cdots a_3a_4e^{-1}a_3a_1^{-1}e^{-1})(a_2^{-1}a_1^{-1}\cdots
 a_{k-1}^{-1}a_{k-2}^{-1}a_{k}^{-1}a_{k-1}^{-1}).\]
Geometrically $z_k$ is the product of crosscap slides $yY_{\mu_k,\alpha_k}^{\pm 1}$, where $\mu_k$ and $\alpha_k$ are circles
 indicated in Figure \ref{r04} (see Section 4 of \cite{StukowSimpSzepPar}), hence on the left of $\mu_k$, conjugation by $z_k$ has the same effect as conjugation by $y$. More precisely,
 \begin{itemize}
  \item[(D2)] $z_ka_1z_k^{-1}=a_1^{-1}$ 
  \item[(D3)] $z_ka_2z_k^{-1}=e^{-1}$\quad for $k\geq 4$,
  \item[(D4)] $z_ka_iz_k^{-1}=a_i$\quad for $3\leq i\leq k-2$,
  \item[(D5)] $z_kbz_k^{-1}=c$\quad for $k\geq 5$,
  \item[(D6)] $z_ky^2z_k^{-1}=y^2$,
  \item[(D7)] $z_kfz_k^{-1}=a_2^{-1}$\quad for $k\geq 4$,
  \item[(D8)] $z_kez_k^{-1}=y^2a_2^{-1}y^{-2}$\quad for $k\geq 4$,
  \item[(D9)] $z_kcz_k^{-1}=y^2by^{-2}$ \quad for $k\geq 5$.
 \end{itemize}
 Relations (D2)--(D4) are straightforward consequences of (A1), (A2), ($\kre{\text{A1}}_1$), ($\kre{\text{A2}}_1$), 
 ($\kre{\text{A2}}_2$). For (D5) we need additionally (A3), ($\kre{\text{A3}}_1$) and ($\kre{\text{B7}}_1$).
 
 Let us prove (D6) -- we will use (A1), ($\kre{\text{A1}}_1$), ($\kre{\text{B1}}$), ($\kre{\text{B3}}$) and ($\kre{\text{B1}}_2$) (hence we need
 all relations that we used to reduce ($\kre{\text{B1}}_2$)).
 \[\begin{aligned}
    z_ky^2&=(a_{k-1}a_ka_{k-2}a_{k-1}\cdots a_3a_4e^{-1}a_3a_1^{-1}e^{-1})(a_2^{-1}a_1^{-1}\cdots
 a_{k-1}^{-1}a_{k-2}^{-1}a_{k}^{-1}a_{k-1}^{-1})y^2=\\
 &=(a_{k-1}a_{k}\cdots a_3a_4)[e^{-1}a_3a_1^{-1}e^{-1}a_2^{-1}a_1^{-1}a_3^{-1}a_2^{-1}]y^2
(a_4^{-1}a_3^{-1}\cdots a_{k}^{-1}a_{k-1}^{-1})=\\
 &=(a_{k-1}a_{k}\cdots a_3a_4)y^2[e^{-1}a_3a_1^{-1}e^{-1}a_2^{-1}a_1^{-1}a_3^{-1}a_2^{-1}]
(a_4^{-1}a_3^{-1}\cdots a_{k}^{-1}a_{k-1}^{-1})=y^2z_k.
   \end{aligned}
\]
 Now we will prove (D7) -- we will use (A1), (A2), ($\kre{\text{A1}}_2$), ($\kre{\text{A2}}_4$), ($\kre{\text{B1}}$).
\[\begin{aligned}
z_kf&=
   (a_{k-1}a_{k}\cdots a_4a_5)a_3a_4[e^{-1}a_3a_1^{-1}e^{-1}a_2^{-1}a_1^{-1}a_3^{-1}a_2^{-1}]a_4^{-1}a_3^{-1}
   (a_5^{-1}a_4^{-1}\cdots a_{k}^{-1}a_{k-1}^{-1})f=\\  
   &=(a_{k-1}a_{k}\cdots a_4a_5)a_3a_4[a_2a_3a_1a_2fa_1a_3^{-1}f]a_4^{-1}a_3^{-1}\SL{f}
   (a_5^{-1}a_4^{-1}\cdots a_{k}^{-1}a_{k-1}^{-1})=\\  
   &=(a_{k-1}a_{k}\cdots a_4a_5)a_2^{-1}a_3a_4[a_2a_3a_1a_2fa_1a_3^{-1}f]a_4^{-1}a_3^{-1}
   (a_5^{-1}a_4^{-1}\cdots a_{k}^{-1}a_{k-1}^{-1})=\\  
   &=a_2^{-1}(a_{k-1}a_{k}\cdots a_3a_4)[e^{-1}a_3a_1^{-1}e^{-1}a_2^{-1}a_1^{-1}a_3^{-1}a_2^{-1}](a_4^{-1}a_3^{-1}
   \cdots a_{k}^{-1}a_{k-1}^{-1})=a_2^{-1}z_k.
  \end{aligned}
\]
Relation (D8) is a consequence of (D6), (D7) and (D1). Finally, (D9) is a consequence of ($\kre{\text{B7}}_3$) and (D6) 
(hence we need ($\kre{\text{B7}}_2$)).

Relations (D2)--(D9) imply that
\begin{itemize}
 \item $(\kre{\text{A4}})$ is superfluous if $g\geq 7$,
 \item $(\kre{\text{A5}})$ is superfluous if $g\geq 7$,
 \item $(\kre{\text{A6}})$ is superfluous if $g\geq 9$, 
 \item $(\kre{\text{B6}}_2)$ is superfluous if $g\geq 6$, 
 \item $(\kre{\text{B8}}_2)$ is superfluous if $g\geq 7$.
\end{itemize}
Moreover, if $g\geq 8$, relations ($\kre{\text{A8a}}$) and ($\kre{\text{A8b}}$) for $i<\frac{g-4}{2}$
are consequences of relation (A8). Hence we can remove all these relations together with generators
$\kre{b}_0,\ldots,\kre{b}_{\frac{g-8}{2}}$ and instead add the relation
\[\kre{b}_{i}=z_{g-1}b_iz_{g-1}^{-1}\quad\text{for $i=\frac{g-6}{2}, \frac{g-4}{2}$}.\]
This is exactly ($\kre{\text{A7c}}$).
\end{proof}
\begin{tw}\label{PresTwist2}
 If $g\geq 5$ is odd, then the group ${\cal T}(N_{g,0})$ is isomorphic to the quotient of the group ${\cal T}(N_{g,1})$ with
presentation given in Theorem \ref{PresTwist1} obtained by adding a generator $\ro$ and relations
\begin{itemize}
 \item[(C1a)] $(a_1a_2\cdots a_{g-1})^g=\ro$,
 \item[$(\kre{\text{C1a}})$] $(a_1^{-1}e^{-1}a_3\cdots a_{g-1})^g=y^2\ro$,
 \item[(C2)] $a_i\ro=\ro a_i$\quad for $i=1,2,\ldots,g-1$,
 \item[$(\kre{\text{C2}})$] $\ro e=f\ro$,
 \item[$(\kre{\text{C5}})$] $\ro y^2=y^{-2}\ro$,
 \item[(C3)] $\ro^2=1$,
 \item[$(\kre{\text{C4a}})$] $(a_2a_3\cdots a_{g-1}e^{-1}a_3\cdots a_{g-1})^{\frac{g-1}{2}}=1$.
\end{itemize}
Moreover, relations $\text{(}\kre{\text{A1}}_\text{2}\text{)}$, $\text{(}\kre{\text{B2}}_\text{2}\text{)}$, $\text{(}\kre{\text{B4}}_\text{2}\text{)}$ are superfluous.

If $g\geq 4$ is even, then the group ${\cal T}(N_{g,0})$ is isomorphic to the quotient of the group ${\cal T}(N_{g,1})$ with
presentation given in Theorem \ref{PresTwist1} obtained by adding a generator $\kre{\ro}$ and relations
\begin{itemize}
 \item[(C1b)] $(a_1a_2\cdots a_{g-1})^g=1$,
 \item[$(\kre{\text{C2}}_1)$] $\kre{\ro}a_1=a_1^{-1}\kre{\ro}$,
 \item[$(\kre{\text{C2}}_2)$] $\kre{\ro}a_i=a_i \kre{\ro}$\quad for $i=3,\ldots,g-1$,
 \item[$(\kre{\text{C2}}_3)$] $\kre{\ro}a_2=e^{-1} \kre{\ro}$,
 \item[$(\kre{\text{C5}})$] $\kre{\ro}y^2=y^{-2}\kre{\ro} $,
 \item[$(\kre{\text{C3}})$] $\kre{\ro}^2=1$,
 \item[$(\kre{\text{C4}})$] $(\kre{\ro} a_2a_3\cdots a_{g-1})^{g-1}=1$.
\end{itemize}
Moreover, relations $\text{(}\kre{\text{A1}}_\text{1}\text{)}$, $(\kre{\text{A2}}_\text{1}\text{)}$, $\text{(}\kre{\text{A2}}_\text{2}\text{)}$ are superfluous.
\end{tw}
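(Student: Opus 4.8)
The plan is to follow the proof of Theorem~\ref{PresTwist1} verbatim, applying the Reidemeister--Schreier process with the same transversal $U=\{1,y\}$, but now to the presentation of $\cM(N_{g,0})$ supplied by Theorems~\ref{SimParSzep2} and~\ref{Uw:F}. Concretely, that presentation is the one of $\cM(N_{g,1})$ from Theorem~\ref{SimParSzep1} enlarged by the generator $\ro$ and the relations (C1a)/(C1b), (C2), (C3), (C5), (C4). Since the defining quotient map $\pi\colon\cM(N_{g,1})\to\cM(N_{g,0})$ fixes $y$, it is compatible with the two $\zz/2\zz$--gradings given by the parity of the number of crosscap slides; hence $\pi$ restricts to a surjection $\cT(N_{g,1})\to\cT(N_{g,0})$, and $\cT(N_{g,0})$ is the quotient of $\cT(N_{g,1})$ by the normal closure of the rewritten $C$--relations. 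The upshot is that every relation not involving $\ro$ rewrites exactly as in the proof of Theorem~\ref{PresTwist1} and reproduces the entire $a_i,e,f,y^2,b,c$ (and $b_i,\kre b_i$) part; so the only new work is rewriting the five $C$--relations together with their $y$--conjugates, and then checking the ``moreover'' clauses.

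First I would fix the parity of $\ro$. Applying the grading homomorphism $\theta\colon\cM(N_{g,0})\to\zz/2\zz$ (trivial on twists, nontrivial on $y$) to (C4) gives $(g-1)(1+\theta(\ro))=0$. For odd $g$ relation (C1a) forces $\theta(\ro)=0$, so $\ro\in\cT(N_{g,0})$, whereas for even $g$ the factor $g-1$ is odd and we get $\theta(\ro)=1$, so $\ro\notin\cT(N_{g,0})$. Accordingly, for odd $g$ the element $\ro$ survives as a generator (coming from $u=1$), its $y$--translate $y\ro y^{-1}$ being the second Reidemeister--Schreier generator, which I will eliminate; for even $g$ the two generators $\ro y^{-1}$ and $y\ro$ produced by $\ro$ are mutually inverse by (C3), so a single generator $\kre\ro:=y\ro$ remains. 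In all computations I would use the conjugation rules $ya_1y^{-1}=a_1^{-1}$, $ya_2y^{-1}=e^{-1}$, $ya_iy^{-1}=a_i$ ($i\geq 3$) coming from (B3) and (B5), together with $\ro a_i=a_i\ro$ (C2), $\ro^2=1$ (C3) and $\ro y=y^{-1}\ro$ (immediate from (C5)); the latter three say precisely that conjugation by $\ro$ fixes every $a_i$ and inverts $y$.

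With these rules the rewriting is mechanical. For even $g$ it is especially clean: since $\kre\ro=y\ro$, relation (C1b) (involving neither $y$ nor $\ro$) rewrites to itself, while (C2), (C5), (C3), (C4) rewrite termwise into $(\kre{\text{C2}}_1)$--$(\kre{\text{C2}}_3)$, $(\kre{\text{C5}})$, $(\kre{\text{C3}})$ and $(\kre{\text{C4}})$; for instance $\kre\ro a_2=y\ro a_2=ya_2\ro=(ya_2y^{-1})y\ro=e^{-1}\kre\ro$ is $(\kre{\text{C2}}_3)$, and $(\kre{\text{C4}})$ is literally (C4) read through $\kre\ro=y\ro$. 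For odd $g$, the $u=1$ rewritings of (C1a), (C2), (C3) return them unchanged; the $u=y$ conjugate of (C1a) becomes $(\kre{\text{C1a}})$ once $y\ro y^{-1}$ is replaced by $y^2\ro$ (a consequence of (C5)); (C5) and the $u=y$ conjugate of (C2) give $(\kre{\text{C5}})$ and $(\kre{\text{C2}})$, the latter via $\ro e=\ro ya_2^{-1}y^{-1}=y^{-1}a_2^{-1}y\ro=f\ro$. The single step with real content is (C4), which rewrites to $(\kre{\text{C4a}})$: one squares a factor to get $(y\ro a_2\cdots a_{g-1})^2=(ya_2\cdots a_{g-1}y^{-1})(a_2\cdots a_{g-1})=e^{-1}a_3\cdots a_{g-1}\,a_2\cdots a_{g-1}$, a cyclic rotation of the base word of $(\kre{\text{C4a}})$, so raising to the power $\frac{g-1}{2}$ yields the stated relation.

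Finally I would verify the ``moreover'' clauses, where conjugation by $\ro$ (resp.\ $\kre\ro$) does the work. For odd $g$, $(\kre{\text{C2}})$ reads $\ro e\ro^{-1}=f$ while $\ro$ fixes each $a_i$ and satisfies $\ro y^2\ro^{-1}=y^{-2}$; conjugating $(\kre{\text{A1}}_1)$, $(\kre{\text{B4}}_1)$ and $(\kre{\text{B2}}_1)$ by $\ro$ (combining the last with $(\kre{\text{B2}}_1)$ itself) produces $(\kre{\text{A1}}_2)$, $(\kre{\text{B4}}_2)$ and $(\kre{\text{B2}}_2)$, which are therefore superfluous. For even $g$, $(\kre{\text{C2}}_1)$--$(\kre{\text{C2}}_3)$ say that conjugation by $\kre\ro$ sends $a_1\mapsto a_1^{-1}$, $a_2\mapsto e^{-1}$ and fixes $a_i$ for $i\geq 3$; conjugating the braid relations (A1) (for $j\geq 4$) and (A2) (for $i=1,2$) by $\kre\ro$ and taking inverses yields $(\kre{\text{A1}}_1)$, $(\kre{\text{A2}}_1)$ and $(\kre{\text{A2}}_2)$, so these may be dropped. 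I expect the main obstacle to be purely organizational: confirming that no conjugate $C$--relation beyond those listed is needed, and that the $(\kre{\text{C4a}})$/$(\kre{\text{C4}})$ base word matches up to the cyclic rotation above -- both of which the explicit squaring computation settles.
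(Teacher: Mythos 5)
Your proposal is correct and follows essentially the same route as the paper: Reidemeister--Schreier with the transversal $\{1,y\}$ applied to the presentation from Theorems \ref{SimParSzep2} and \ref{Uw:F}, elimination of the redundant $\ro$-generator, rewriting of the $C$-relations and their $y$-conjugates, and the superfluousness of $(\kre{\text{A1}}_2)$, $(\kre{\text{B2}}_2)$, $(\kre{\text{B4}}_2)$ (resp.\ $(\kre{\text{A1}}_1)$, $(\kre{\text{A2}}_1)$, $(\kre{\text{A2}}_2)$) via conjugation by $\ro$ (resp.\ $\kre{\ro}$). The only differences are cosmetic bookkeeping choices (eliminating $\ro y^{-1}$ via (C3) rather than (C5), and deriving $(\kre{\text{C4a}})$ from (C4) by squaring instead of rewriting (C4a) directly), which lead to the same presentation.
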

\begin{proof}
 We follow the lines of the proof of Theorem \ref{PresTwist1}, but as a starting point we now have Theorem
\ref{SimParSzep2}. Moreover, it is convenient to add relations (C2) and (C5), so in particular (C4a) and (C4b) are equivalent to (C4) (see Theorem \ref{Uw:F}). Generator $\ro$ yields two additional generators for ${\cal T}(N_{g,0})$, namely $\ro, y\ro y^{-1}$ if $g$ is odd and $\kre{\ro}=y\ro, \ro y^{-1}$ if $g$ is even. 

Suppose first that $g$ is odd. Then (C5) and its conjugate by $y^{-1}$ rewrite as
\[\begin{aligned}
  &y^2\ro=y\ro y^{-1}, \\
  &y^2\ro y^2=\ro.
  \end{aligned}
\]
The first relation implies that we can remove generator $y\ro y^{-1}$ -- we will do this silently from now on. The
second one gives $(\kre{\text{C5}})$.

Relations (C1a), (C2), (C3) does not need rewriting, and if we conjugate them by $y$ we get respectively $(\kre{\text{C1a}})$,
$(\kre{\text{C2}})$ (we use here $(\kre{\text{D1}})$, hence also $(\kre{\text{A2}}_3)$) and relation equivalent to $(\kre{\text{C5}})$.

Relation (C4a) and its conjugate by $y$ rewrite respectively as 
\[\begin{aligned}
    &(f^{-1}a_3\cdots a_{g-1}a_2a_3\cdots a_{g-1})^{\frac{g-1}{2}}=1,\\
    &(a_2a_3\cdots a_{g-1}e^{-1}a_3\cdots a_{g-1})^{\frac{g-1}{2}}=1.
  \end{aligned}
\]
The second relation is $(\kre{\text{C4a}})$, and if we conjugate it by $\ro$, by (C2) and $(\kre{\text{C2}})$
we get the first one.

Finally, observe that if we conjugate relations $(\kre{\text{A1}}_1)$, $(\kre{\text{B2}}_1)$,
$(\kre{\text{B4}}_1)$ by $\ro$ we get respectively $(\kre{\text{A1}}_2)$, $(\kre{\text{B2}}_2)$,
$(\kre{\text{B4}}_2)$.

Now assume that $g$ is even, hence $\kre{\ro}=y\ro\in {\cal T}(N_{g,0})$. Relation (C5) and its conjugate by $y$ rewrite
as
\[\begin{aligned}
   &y\ro=\ro y^{-1},\\
   &y^2(y\ro)=(y\ro)y^{-2}.
  \end{aligned}
\]
The first relation implies that we can remove generator $\ro y^{-1}$ -- we will do this silently from now on. The
second one gives $(\kre{\text{C5}})$.

If we rewrite relation (C2) we get relations $(\kre{\text{C2}}_1)$--$(\kre{\text{C2}}_3)$. 

Relations (C1b), (C3) and (C4) rewrite respectively as (C1b), $(\kre{\text{C3}})$ and $(\kre{\text{C4}})$. Their conjugates by $y^{\pm 1}$ are superfluous since, by $(\kre{\text{C2}}_1)$--$(\kre{\text{C2}}_3)$, they are the same as conjugates by
$\kre{\ro}$.

Finally, observe that if we conjugate relations 
(A1), (A2) by $\kre{\ro}$  we get respectively $(\kre{\text{A1}}_1)$, $(\kre{\text{A2}}_1)$--$(\kre{\text{A2}}_2)$.
\end{proof}
\begin{uw}
 Observe that relations $(\kre{\text{B2}}_1)$ and (C1a), $(\kre{\text{C4}})$ allows to remove $y^2$ and $\ro, \kre{\ro}$
from 
 the generating sets, hence the generating sets of the presentations given by Theorems \ref{PresTwist1} and \ref{PresTwist2}
are
 really Dehn twists about nonseparating circles.
 \end{uw}
\section{Geometric interpretation}
We devote this last section to the geometric interpretation of relations
obtained in Theorem \ref{PresTwist1}.

Relations (A1), (A3), (A9a), (A9b), $(\kre{\text{A1}}_1)$, $(\kre{\text{A1}}_2)$,
$(\kre{\text{A3}}_1)$, $(\kre{\text{A3}}_2)$, $(\kre{\text{B3}})$,
$(\kre{\text{B4}}_1)$, $(\kre{\text{B4}}_2)$, $(\kre{\text{B7}}_2)$,
$(\kre{\text{A9a}})$, $(\kre{\text{A9b}})$ are standard commutativity relations
between Dehn twists with disjoint supports.

Relations (A2), (A4), $(\kre{\text{A2}}_1)$--$(\kre{\text{A2}}_3)$,
$(\kre{\text{A4}})$ are standard braid relations between Dehn twists about
circles intersecting in one point.

Relations (A5), (A6), (A8), $(\kre{\text{A5}})$, $(\kre{\text{A6}})$,
$(\kre{\text{A8a}})$, $(\kre{\text{A8b}})$ came from Matsumoto
\cite{MatsumotoPres} presentation of mapping class group of an orientable
surface. They have simple interpretation as relations between fundamental
elements of Artin groups -- for details see \cite{MatsumotoPres} and
\cite{ParLab}. 

Relations $(\kre{\text{B7}}_1)$ and $(\kre{\text{A7c}})$ are simple conjugation
relations of the form $t_{f(\alpha)}=ft_{\alpha}f^{-1}$, where $t_{\alpha}$ is
the twist about a circle $\alpha$.

Relations $(\kre{\text{B6}}_2)$ and $(\kre{\text{B8}}_2)$ are conjugates (by
$y^{\pm 1}$) of $(\kre{\text{B6}}_1)$ and $(\kre{\text{B8}}_1)$ respectively,
and $(\kre{\text{B2}}_2)$ is equivalent to the conjugation of
$(\kre{\text{B2}}_1)$, hence we are left with four interesting relations:
$(\kre{\text{B1}})$, $(\kre{\text{B2}}_1)$, $(\kre{\text{B6}}_1)$ and
$(\kre{\text{B8}}_1)$.

Relation $(\kre{\text{B1}})$ can be rewritten in a slightly more symmetric form
\[
(a_2ea_1)a_3^{-1} (a_2ea_1)a_3(a_2fa_1)a_3^{-1} (a_2fa_1)a_3=1.
\]
This is a relation between five Dehn twists $a_1,a_2,a_3,e,f$ illustrated in
Figures \ref{r01} and \ref{r07}.

Relation $(\kre{\text{B2}}_1)$ can be rewritten as
\[y^2=(a_2ea_1)^2(a_2fa_1)^2.\]
This is a relation between five twists $a_1,a_2,e,f,y^2$ illustrated in Figures
\ref{r01} and \ref{r07}.

Relation $(\kre{\text{B6}}_1)$ is a relation between four Dehn twists 
\[b,\ c,\ f'=(a_1a_2a_3)f^{-1}(a_1a_2a_3)^{-1},\
e'=(a_3a_2)^{-1}e^{-1}(a_3a_2),\]
illustrated in Figures \ref{r01}, \ref{r07} and \ref{r06}
\begin{figure}[h]
\begin{center}
\includegraphics[width=0.47\textwidth]{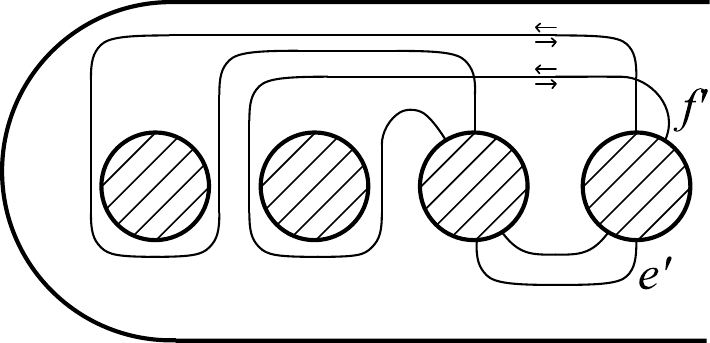}
\caption{Dehn twists $e'$ and $f'$.}\label{r06} %
\end{center}
\end{figure}

Finally, relation $(\kre{\text{B8}}_1)$ is a relation between six Dehn twists 
\[\begin{aligned}
&c'=(a_1ea_3^{-1}a_4^{-1})c(a_1ea_3^{-1}a_4^{-1})^{-1},\ b'=(a_4a_3a_2a_1)^{-1}b^{-1}(a_4a_3a_2a_1),\\
&a_4,\ a'=(a_3^{-1}ea_2a_3)^{-1}a_4(a_3^{-1}ea_2a_3),\ a_2,\ e.
\end{aligned}\]
illustrated in Figures \ref{r01}, \ref{r07} and \ref{r05}.
\begin{figure}[h]
\begin{center}
\includegraphics[width=0.74\textwidth]{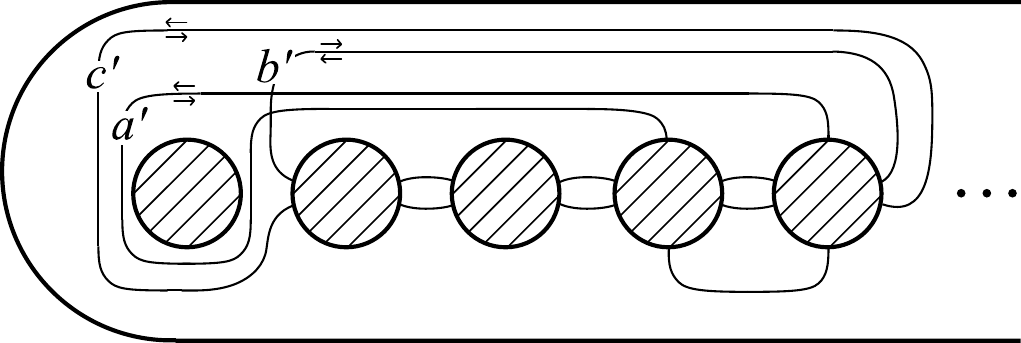}
\caption{Dehn twists $a'$, $b'$ and $c'$.}\label{r05} %
\end{center}
\end{figure}
\bibliographystyle{acm}

\end{document}